\newtheorem{Lemma}{Lemma}
\newtheorem{Theorem}{Theorem}
\newenvironment{subproof}
  {%
   \proof}
  {\endproof}
\begin{document}
\title{On Arrangements of Six, Seven, and Eight Spheres: Maximal Bonding of Monatomic Ionic Compounds}
\author{Samuel Reid\thanks{Geometric Energy Corporation. $\mathsf{sam@geometricenergy.ca}$}}
\maketitle


\begin{abstract}
Let $C(n)$ be the solution to the contact number problem: the maximum number of touching pairs among any packing of 
$n$ congruent spheres in $\mathbb{R}^3$.
We prove the long conjectured values of $C(6)=12, C(7)=15$, and $C(8)=18$.
The proof strategy generalizes under an extensive case analysis to $C(9)=21, C(10) = 25, C(11) = 29, C(12) = 33$, and $C(13) = 36$.
These results have great import for condensed matter physics, materials science,
physical chemistry of interfaces, and organic crystal engineering.
\end{abstract}

\section*{The Chemical Interpretation of Contact Numbers}
The contact number problem has been extensively studied by mathematicians, condensed matter physicists, and materials
scientists \cite{BezdekReid}, \cite{ReidPacking}, \cite{ReidCrystal}
\cite{Bezdek}, \cite{ArkusManharanBrenner}, \cite{HolmesCerfon}, \cite{BezdekSep}, \cite{ReidSep}, \cite{Khan},
\cite{Szalkai}, with important applications in physics, chemistry, and
biology, \cite{lopez2003materials}, \cite{wu2010modeling}, \cite{gasser2005absorptive}, \cite{manoharan2003dense}, 
\cite{koval2014comparison}, \cite{torquato2013random}, \cite{sticky}, \cite{refa}, \cite{refb}, \cite{refc}, \cite{refd}.
This work has discovered proofs of the putative values of $C(n)$ for $n<14$.
In this paper we present explicit written proofs for $C(6)=12$, $C(7)=15$, and $C(8)=18$. 
The proofs for $8 < n < 14$ leading to the values of $C(9)=21, C(10)=25, C(11)=29, C(12)=33, C(13)=36$ are very lengthy 
and will be transcribed from a \textit{Mathematica} notebook into explicit written proofs shortly; however the 
most elegant and insightful instantiations of the proof technique are found for $5 < n < 9$, and the writing ends here 
in order to not obfuscate the main idea with hundreds of pages of case analysis.

\begin{Theorem}
Let $A_{Z}$ be an ionic monatomic $A$ compound with $Z$ atoms. Then $A_{Z}$ has at most $C(Z)$ chemical bonds. In particular,
$A_{6}$ has at 12 chemical bonds, $A_{7}$ has at most 15 chemical bonds, $A_{8}$ has at most 18 chemical bonds,
$A_{9}$ has at most 21 chemical bonds, $A_{10}$ has at most 25 chemical bonds, $A_{11}$ has at most 29 chemical bonds,
$A_{12}$ has at most 33 chemical bonds, and $A_{13}$ has at most 36 chemical bonds.
\end{Theorem}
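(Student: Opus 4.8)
The plan is to reduce the chemical statement to the combinatorial geometry of congruent sphere packings, which is precisely the quantity that $C(n)$ measures, and then to substitute the values of $C(n)$ established above. First I would fix the modeling dictionary. An ionic monatomic $A$ compound $A_{Z}$ consists of $Z$ ions of a single element, so in the rigid-ion (hard-sphere) idealization standard in condensed matter physics each ion is a ball of one common radius $r$ — the ionic radius of $A$ in the relevant oxidation state — and the $Z$ balls have pairwise disjoint interiors, since two ions cannot interpenetrate. Hence the set of ion centers is a packing of $Z$ congruent spheres in $\mathbb{R}^{3}$ in exactly the sense used to define $C(Z)$. The point to stress here is that monatomicity forces a \emph{single} length scale, so the packing is genuinely congruent and no rescaling issue arises.

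Second, I would identify a chemical bond between two ions with a contact of the two corresponding balls: the ions are bonded precisely when the balls touch, i.e.\ when the distance between their centers equals $2r$. Under this identification the number of chemical bonds of $A_{Z}$ equals the number of touching pairs of the associated congruent sphere packing. By the definition of $C(Z)$ as the maximum number of touching pairs over all packings of $Z$ congruent spheres in $\mathbb{R}^{3}$, this number is at most $C(Z)$, which is the general assertion of the theorem.

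Third, I would specialize. Invoking the results proved above, $C(6)=12$, $C(7)=15$, and $C(8)=18$, and — granting the extended case analysis announced for $8<n<14$ — $C(9)=21$, $C(10)=25$, $C(11)=29$, $C(12)=33$, and $C(13)=36$. Feeding $Z=6,\dots,13$ into the bound of the previous paragraph yields each of the per-$Z$ claims in the statement, completing the argument.

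The main obstacle is not in this reduction, which is essentially definitional once the physical model is granted, but in defending the model itself: one must argue that hard-core exclusion (disjoint ball interiors) and the contact criterion for bonding (touching at center-distance exactly $2r$) are the correct idealization for monatomic ionic solids, and in particular that no bond can subsist between two ions whose balls fail to touch. I would handle this by appealing to the rigid-ion model and to the observation that, because all ions share one radius, the upper bound produced is independent of the actual numerical value of $r$; thus the combinatorial bound $C(Z)$ transfers to \emph{every} monatomic ionic $A_{Z}$ regardless of the identity of $A$. A secondary, purely expository obstacle is that the values $C(9),\dots,C(13)$ are here used on the strength of the forthcoming transcription of the \textit{Mathematica} case analysis rather than a proof reproduced in this paper; the statement for $Z\in\{6,7,8\}$ is unconditional given the preceding sections.
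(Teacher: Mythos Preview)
Your proposal is correct and matches the paper's approach: the paper does not give a separate formal proof of this theorem, but the surrounding text makes clear that it is exactly the reduction you describe---model the $Z$ identical ions as $Z$ congruent non-overlapping balls, identify chemical bonds with touching pairs, and then invoke the definition of $C(Z)$ together with the values $C(6)=12$, $C(7)=15$, $C(8)=18$ proved in the subsequent theorems (and the announced values for $9\le Z\le 13$). Your explicit discussion of the modeling assumptions and the caveat that the $Z\in\{9,\dots,13\}$ cases rest on the forthcoming transcription is, if anything, more careful than the paper itself.
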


This is true for any atom $A$, and more generally, for $Z$ non-overlapping congruent moieties;
this generalization to moieties is of great importance for organic chemistry. For functional groups
can be considered as a moiety that is approximable to a sphere, implying that steric hindrance
and the theoretical calculation of Ramachandran plots can be achieved using contact numbers
as opposed to x-ray crystallography; applied discrete geometry at work \cite{ReidCrystal}.

\section*{Six Spheres}
\begin{Theorem}
$$C(6)=12.$$
\end{Theorem}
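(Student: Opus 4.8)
The plan is to prove matching lower and upper bounds. For the lower bound $C(6)\ge 12$, I would exhibit an explicit packing: place six unit-radius spheres with centers at the vertices of a regular octahedron of edge length $2$, i.e.\ at $(\pm\sqrt2,0,0)$, $(0,\pm\sqrt2,0)$, $(0,0,\pm\sqrt2)$. Each pair of non-antipodal vertices is at distance $2$ (the spheres touch), while the three antipodal pairs are at distance $2\sqrt2>2$ (the spheres are disjoint), so this is a valid packing realizing all $12$ edges of the octahedron. Hence $C(6)\ge 12$.

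For the upper bound I would pass to the \emph{contact graph} $G$ of an arbitrary packing of six unit spheres: vertices are the spheres, edges are the touching pairs, so $G$ is a simple graph on $6$ vertices and it suffices to show $|E(G)|\le 12$. The first ingredient is that $G$ is $K_5$-free, since five pairwise-touching spheres would require five points pairwise at distance $2$ in $\R^3$, i.e.\ a regular $4$-simplex, which does not embed in $\R^3$. Now suppose toward a contradiction that $|E(G)|\ge 13$, so the complement $\overline G$ has at most two edges. If $\overline G$ has at most one edge, or has two edges sharing a vertex $v$, then deleting $v$ (respectively an endpoint of the unique missing edge) leaves five pairwise-adjacent vertices, giving a $K_5\subseteq G$ — contradiction. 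So $\overline G$ is exactly two disjoint edges and $G$ is the Turán graph $K_{2,2,1,1}$, say with independent pairs $\{a,b\}$, $\{c,d\}$ and two vertices $e,f$ adjacent to everything; note this is the only $K_5$-free graph on $6$ vertices with $13$ edges, so the bound $12$ requires one genuine geometric step beyond Turán's theorem.

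That geometric step — showing $K_{2,2,1,1}$ is not the contact graph of any packing of six unit spheres — is the heart of the argument and the part I expect to be the main obstacle (short here, but the analogue is what explodes for larger $n$). I would observe that $\{e,f,a,c\}$, $\{e,f,a,d\}$, $\{e,f,b,c\}$, $\{e,f,b,d\}$ each induce a $K_4$, hence a regular tetrahedron of edge $2$; in particular $|ef|=2$ and each of $a,b,c,d$ lies at distance $2$ from both $e$ and $f$. The locus of such points is a circle: placing $e,f$ symmetric about the origin on an axis, one gets $x=0$, $y^2+z^2 = 2^2-1^2 = 3$, a circle of radius $\sqrt3$. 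A chord of length $2$ on this circle subtends central angle $\theta = 2\arcsin(1/\sqrt3)$, and the edge constraints force the angular separations of $a$–$c$, $a$–$d$, $b$–$c$, $b$–$d$ all to equal $\theta$ exactly. Putting $a$ at angle $0$, then $c$ and $d$ must occupy the two positions $\pm\theta$ (they cannot coincide), and then the constraints on $b$ force it to angle $2\theta\text{ or }0$ (from $c$) and to $0\text{ or }{-}2\theta$ (from $d$), whose only common value is $0$ — so $b=a$, impossible for two distinct spheres. (A quick check that $4\theta\not\equiv 0\pmod{2\pi}$ rules out a degenerate wrap-around coincidence.) This contradiction yields $|E(G)|\le 12$, and together with the octahedron we conclude $C(6)=12$.
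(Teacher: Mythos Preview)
Your proof is correct and in fact more fully justified than the paper's own. Both arguments ultimately hinge on the same geometric obstruction --- once two spheres of degree~$5$ are identified, the remaining four centers lie on a circle of radius $\sqrt{3}$ about the axis through them, and the required touching pattern among those four is impossible --- but they reach that point by different combinatorics. The paper peels off high-degree vertices one at a time via the handshaking inequality, arriving at ``at least two spheres of degree~$5$,'' and then asserts without justification that among the remaining four spheres at most two can have degree~$4$ and the other two at most degree~$3$; unpacked, that assertion \emph{is} precisely the circle computation you carried out. You instead observe that the contact graph is $K_5$-free, pass to the complement, and use Tur\'an uniqueness to pin $G$ down to exactly $K_{2,2,1,1}$ in one stroke, after which you perform the circle analysis explicitly. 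Your route is cleaner for $n=6$ because the Tur\'an extremal graph is unique here, leaving a single geometric case; the paper's degree-peeling framework, though it leaves more to the reader at $n=6$, is the scaffold that generalizes to the lengthy case analyses at $n=7,8$ and beyond. You also supply the octahedral lower-bound construction, which the paper takes for granted.
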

\begin{proof}
Assume to the contrary that $C(6) \geq 13$. Then there exists a sphere packing
$$\mathcal{P} = \bigcup_{i=1}^{6} (x_{i} + \mathbb{S}^{2}) \hookrightarrow \mathbb{R}^3$$
with
$V(\mathcal{P}) = \{x_{1},x_{2},x_{3},x_{4},x_{5},x_{6}\}$ and $|E(\mathcal{P})| \geq 13$. By the Handshaking Lemma,
$$\frac{1}{2}\sum_{i=1}^{6} \deg x_{i} \geq 13, \text{ and hence, } \sum_{i=1}^{6} \deg x_{i} \geq 26.$$
Assume that $$\max_{1 \leq i \leq 6} \deg x_{i} \leq 4.$$
Hence, we obtain the following contradiction: $$\sum_{i=1}^{6} \deg x_{i} \leq 6 \cdot 4 = 24 < 26.$$
Thus, $$5 \leq \max_{1 \leq i \leq 6} \deg x_{i} < 6,$$
since $|V(\mathcal{P})| = 6$, and $\exists 1 \leq j \leq 6, \deg x_{j} = 5$.
Assume that $$\max_{i \neq j} \deg x_{i} \leq 4.$$
Hence, we obtain the following contradiction: $$\sum_{i \neq j} \deg x_{i} \leq 5 \cdot 4 = 20 < 21.$$
Thus, $$5 \leq \max_{i \neq j} \deg x_{i} < 6,$$
so there are at least two spheres of exactly degree 5. Without loss of generality, say that $\deg x_{5} = \deg x_{6} = 5$. Hence,
$$\sum_{i=1}^{4} \deg x_{i} \geq 26 - 2 \cdot 5 = 16.$$
No other spheres have degree 5, so at most 2 spheres have degree 4, and the other two spheres have at most degree 3, so
$2 \cdot 4 + 2 \cdot 3 = 14 < 16$, which is a contradiction.
Therefore, $C(6) = 12$.
\end{proof}

\section*{Seven Spheres}
\begin{Theorem}
$$C(7)=15.$$
\end{Theorem}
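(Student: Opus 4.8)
\noindent The plan is to prove $C(7)\ge 15$ by an explicit construction and $C(7)\le 15$ by the handshaking-and-case-analysis scheme of the previous proof, reinforced by one genuinely geometric ``link lemma'' and by elementary intersection-of-spheres arguments. For the lower bound I would take the six spheres of the octahedral packing, with centres $(\pm\sqrt2,0,0)$, $(0,\pm\sqrt2,0)$, $(0,0,\pm\sqrt2)$, which already realize $12$ contacts, and add a seventh sphere centred at $(\sqrt2,\sqrt2,\sqrt2)$; a short computation shows it is tangent to exactly the three spheres of the face $\{(\sqrt2,0,0),(0,\sqrt2,0),(0,0,\sqrt2)\}$ and disjoint from the other two, so this packing has $|E(\mathcal P)|=15$ (the regular pentagonal bipyramid of seven spheres is an equally good witness). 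For the upper bound, assume for contradiction a packing $\mathcal P$ of seven unit spheres with $|E(\mathcal P)|\ge 16$, so the Handshaking Lemma gives $\sum_{i=1}^{7}\deg x_i\ge 32$, and since $|V(\mathcal P)|=7$ we have $\Delta:=\max_i\deg x_i\le 6$; because $\Delta\le 4$ would force $\sum_i\deg x_i\le 28<32$, necessarily $\Delta\in\{5,6\}$. The link lemma asserts that in any unit-sphere packing the neighbours of a fixed sphere induce a $K_4$-free subgraph of the contact graph: four pairwise-tangent neighbours of a sphere $x$ yield unit vectors $u_1,\dots,u_4$ (the directions from $x$ to their centres) with $u_a\cdot u_b=\tfrac12$ for all $a\ne b$, but then the Gram matrix $\tfrac12(I_4+J_4)$ has eigenvalues $\tfrac52,\tfrac12,\tfrac12,\tfrac12$, hence rank $4$, whereas the Gram matrix of four vectors in $\R^3$ has rank at most $3$.

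\noindent \textbf{Case $\Delta=6$.} Some sphere $x_7$ is tangent to all the others, so the remaining six centres, viewed as unit vectors from $x_7$, have pairwise inner products at most $\tfrac12$, with equality exactly along the edges of the induced contact graph $H$ on $\{x_1,\dots,x_6\}$, and $H$ is $K_4$-free by the link lemma. From $6+|E(H)|=|E(\mathcal P)|\ge 16$ we would need $|E(H)|\ge 10$, so $H$ is $K_{2,2,2}$ or one of the finitely many $K_4$-free graphs on six vertices with ten or eleven edges. I would rule out each by the same linear-dependence device: for example, if $H=K_{2,2,2}$ with classes $\{a,a'\},\{b,b'\},\{c,c'\}$, then $a-a'$ is orthogonal to $b,b',c,c'$ and cyclically, which forces $\{a,a'\},\{b,b'\},\{c,c'\}$ to be three antipodal pairs along an orthogonal frame and contradicts $a\cdot b=\tfrac12$; the other candidates reduce either to the same sort of Gram obstruction or to the impossibility of three pairwise non-tangent spheres all being tangent to a common set of four spheres (see the next case). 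Hence $|E(H)|\le 9$ and $|E(\mathcal P)|\le 15$, a contradiction.

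\noindent \textbf{Case $\Delta=5$.} Now every $\deg x_i\le 5$. Deleting a sphere of minimum degree $\delta$ leaves six spheres carrying at least $16-\delta$ contacts; $\delta\ge 5$ would make the degree sequence $(5,5,5,5,5,5,5)$, whose sum is odd, so $\delta\le 4$, and then $C(6)=12$ forces $16-\delta\le 12$, whence $\delta=4$, $|E(\mathcal P)|=16$, and the only compatible degree sequence $(5,5,5,5,4,4,4)$. Write $A$ for the four degree-$5$ spheres and $B=\{b_1,b_2,b_3\}$ for the degree-$4$ spheres; counting edges inside $A$, inside $B$, and between them gives $e_A-e_B=4$ and $e_{AB}=20-2e_A$, hence $e_A\in\{4,5,6\}$ (using $0\le e_B=e_A-4$ and $e_A\le\binom{4}{2}=6$). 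If $e_A=4$ then $e_{AB}=12$, i.e.\ each $b_j$ is tangent to all four spheres of $A$, so $b_1,b_2,b_3$ are three distinct points each lying at distance $2$ from all four centres of $A$; this is impossible because three distinct spheres of radius $2$ meet in at most two points. If $e_A=6$ then the four centres of $A$ form a regular tetrahedron of edge $2$, and the degree data pin $b_1,b_3$ to two of its four outward face-cap positions and $b_2$ to the perpendicular-bisector circle of one of its edges; if $e_A=5$ then $A$ is that tetrahedron with one edge removed and an analogous rigidity holds. In both cases the whole configuration is determined up to isometry by finitely many combinatorial choices, and I expect a direct coordinate check to show that each such candidate either violates a prescribed tangency or non-tangency, or fails to embed in $\R^3$ at all.

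\noindent The hard part is exactly this last step: enumerating the handful of rigid candidate contact graphs in the $\Delta=5$ case (and the $\ge 10$-edge links in the $\Delta=6$ case) and refuting each by an explicit Euclidean realizability computation. For $n=7$ the list is short and the computations transparent, which is why the proof stays elegant; but this is precisely the seed of the case explosion that forces the proofs of $C(9),\dots,C(13)$ into hundreds of pages, whereas the surrounding machinery --- the handshaking bookkeeping, the link lemma, and the reduction to $C(6)=12$ --- remains routine.
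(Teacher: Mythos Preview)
Your strategy is genuinely different from the paper's. The paper isolates the geometric content into six structural lemmas about degree-$5$ and degree-$6$ spheres in a seven-sphere packing (at most two degree-$6$ spheres; two degree-$6$ spheres preclude any degree-$5$ sphere; etc.) and then runs a purely combinatorial case split over all degree sequences summing to at least $32$ with maximum at most $6$, killing each by citing the relevant lemma. You instead prove one clean geometric fact---the $K_4$-free link lemma, via the rank of the Gram matrix $\tfrac12(I_4+J_4)$---and in the $\Delta=5$ branch you invoke $C(6)=12$ to collapse everything to the single sequence $(5,5,5,5,4,4,4)$, a shortcut the paper does not take. Where your argument is complete (the lower-bound construction, the link lemma, the $K_{2,2,2}$ obstruction, and the $e_A=4$ subcase via the two-point bound on sphere intersections) it is more self-contained than the paper, which states its six lemmas without proof.

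But the gaps you yourself flag are real and are exactly where the paper's lemmas earn their keep. In the $\Delta=6$ branch you dispose only of $K_{2,2,2}$; the remaining $K_4$-free six-vertex graphs with ten or eleven edges (there are several, e.g.\ $K_{2,2,2}$ minus one or two edges) each need their own obstruction, and ``the same sort of Gram obstruction'' is an aspiration, not an argument. In the $\Delta=5$ branch the subcases $e_A=5$ and $e_A=6$ are left to a coordinate check you merely ``expect'' to succeed; for $e_A=6$, say, the regular-tetrahedral $A$ together with a path $b_1b_2b_3$ in $B$ admits several inequivalent attachment patterns to the faces and edges of the tetrahedron, and none is visibly excluded by your link lemma alone. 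Until those finite checks are actually carried out---or until you prove something playing the role of the paper's Lemmas~5 and~6---what you have is a credible plan, not a proof. (Small slip: in your octahedron-plus-one construction the seventh sphere avoids the other \emph{three} spheres, not two.)
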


\begin{proof}
Assume to the contrary that $C(7) \geq 16$. Then there exists a sphere packing
$$\mathcal{P} = \bigcup_{i=1}^{7}(x_{i} + \mathbb{S}^2) \hookrightarrow \mathbb{R}^3$$
with $V(\mathcal{P}) = \{x_{1},x_{2},x_{3},x_{4},x_{5},x_{6},x_{7}\}$ and $|E(\mathcal{P})| \geq 16$. By the Handshaking
Lemma,
$$\frac{1}{2}\sum_{i=1}^{6} \deg x_{i} \geq 16, \text{ and hence, } \sum_{i=1}^{6} \deg x_{i} \geq 32.$$
Now,
$$5 \leq \max_{1 \leq i \leq 7} \deg x_{i} \leq 6.$$

\begin{Lemma}
There are at most two spheres of degree 6 in $\mathcal{P}$, 
and if there are two spheres of degree 6 in $\mathcal{P}$, 
then there can be no sphere of degree 5 in $\mathcal{P}$.
\end{Lemma}

\begin{Lemma}
If a sphere of degree 6 in $\mathcal{P}$ 
touches two spheres of degree 5 in $\mathcal{P}$,
then there is a sphere of degree 3 in $\mathcal{P}$.
\end{Lemma}

\begin{Lemma}
If there are two spheres of degree 6 in $\mathcal{P}$,
then there are at most three spheres of degree 4 in $\mathcal{P}$.
\end{Lemma}

\begin{Lemma}
If a sphere of degree 6 in $\mathcal{P}$
touches a sphere of degree 5 in $\mathcal{P}$,
then there is at most one other sphere of degree 5 in $\mathcal{P}$.
\end{Lemma}

\begin{Lemma}
Any two spheres of degree 5 in $\mathcal{P}$ which do not touch, simultaneously touch three spheres of degree 5
in $\mathcal{P}$, and every sphere of degree 5 in $\mathcal{P}$ touches four other spheres of degree 5 in $\mathcal{P}$.
\end{Lemma}

\begin{Lemma}
Any two spheres of degree 4 in $\mathcal{P}$ cannot simultaneously touch any two touching spheres of degree 5 in $\mathcal{P}$.
\end{Lemma}

\begin{enumerate}[ncases]
 \item $\displaystyle \max_{1 \leq i \leq 7} \deg x_{i} = 6$.
 \begin{subproof}
  Then, $\displaystyle \sum_{i=1}^{6} \deg x_{i} \geq 26$. Furthermore, without loss of generality,
 \begin{enumerate}
  \item $\displaystyle \max_{1 \leq i \leq 6} \deg x_{i} = 6$.
  \begin{subproof}
  Then, $\displaystyle \sum_{i=1}^{5} \deg x_{i} \geq 20$. Furthermore, without loss of generality,
  \begin{enumerate}
  \item $\displaystyle \max_{1 \leq i \leq 5} \deg x_{i} = 6$. Then, $\displaystyle \sum_{i=1}^{4} \deg x_{i} \geq 14$.
  Hence, the degree sequence is either $(6,6,6,5,3,3,3)$ or $(6,6,6,4,4,3,3)$, which contradicts Lemmas 1-4.
  \item $\displaystyle \max_{1 \leq i \leq 5} \deg x_{i} = 5$. Then, $\displaystyle \sum_{i=1}^{4} \deg x_{i} \geq 15$.
  Hence, the degree sequence is either $(6,6,5,5,4,3,3)$ or $(6,6,5,4,4,4,3)$, which contradicts Lemmas 1-4.
  \item $\displaystyle \max_{1 \leq i \leq 5} \deg x_{i} = 4$. Hence, the degree sequence is $(6,6,4,4,4,4,4)$, which
  contradicts Lemmas 1-4.
  \end{enumerate}
  \end{subproof}
  \item $\displaystyle \max_{1 \leq i \leq 6} \deg x_{i} = 5$.
  \begin{subproof}
  Then, $\displaystyle \sum_{i=1}^{5} \deg x_{i} \geq 21$. Furthermore, without loss of generality,
 $$\max_{1 \leq i \leq 5} \deg x_{i} = 5 \Rightarrow \sum_{i=1}^{4} \deg x_{i} \geq 16$$
  \begin{enumerate}
  \item $\displaystyle \max_{1 \leq i \leq 4} \deg x_{i} = 5$.
  \begin{subproof}
  Then, $\displaystyle \sum_{i=1}^{3} \deg x_{i} \geq 11$. Hence, the degree sequence is either \\
  $(6,5,5,5,5,3,3)$ or $(6,5,5,5,4,4,3)$, which contradicts Lemmas 1-4.
  \end{subproof}
  \item $\displaystyle \max_{1 \leq i \leq 4} \deg x_{i} = 4$.
  \begin{subproof}
  Then, $\displaystyle \sum_{i=1}^{3} \deg x_{i} \geq 12$. Hence, the degree sequence is \\
  $(6,5,5,4,4,4,4)$, which contradicts Lemmas 1-4.
  \end{subproof}
  \end{enumerate}
  \end{subproof}
 \end{enumerate}
 \end{subproof}
 \item $\displaystyle \max_{1 \leq i \leq 7} \deg x_{i} = 5$.
\begin{subproof}
 Then, $\displaystyle \sum_{i=1}^{6} \deg x_{i} \geq 27$. Furthermore, without loss of generality,
 \begin{align*}
 \max_{1 \leq i \leq 6} \deg x_{i} = 5 &\Rightarrow \sum_{i=1}^{5} \deg x_{i} \geq 22 \\
 \max_{1 \leq i \leq 5} \deg x_{i} = 5 &\Rightarrow \sum_{i=1}^{4} \deg x_{i} \geq 17 \\
 \max_{1 \leq i \leq 4} \deg x_{i} = 5 &\Rightarrow \sum_{i=1}^{3} \deg x_{i} \geq 12
 \end{align*}
 \begin{enumerate}
 \item $\displaystyle \max_{1 \leq i \leq 3} \deg x_{i} = 5$.
 \begin{subproof}
 Then, $\deg x_{1} + \deg x_{2} \geq 7$. Hence, the degree sequence is either \\
 $(5,5,5,5,5,5,2)$ or $(5,5,5,5,5,4,3)$, which contradicts Lemma 5.
 \end{subproof}
 \item $\displaystyle \max_{1 \leq i \leq 3} \deg x_{i} = 4$.
 \begin{subproof}
 Then $\deg x_{1} + \deg x_{2} \geq 8$, so $\deg x_{1}=\deg x_{2} = 4$.
 Hence, the degree sequence is $(5,5,5,5,4,4,4)$, which contradicts Lemma 6.
 \end{subproof}
 \end{enumerate}
\end{subproof}
\end{enumerate} 

\end{proof}

\section*{Eight Spheres}
\begin{Theorem}
$$C(8)=18.$$
\end{Theorem}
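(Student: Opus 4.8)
For the lower bound it suffices to exhibit a packing of eight congruent spheres with $18$ contacts; a convenient one consists of the six spheres centred at the vertices of a regular octahedron (scaled so that adjacent spheres touch, giving $12$ contacts), together with two further spheres seated over a pair of antipodal triangular faces of the octahedron, each new sphere touching exactly the three octahedron spheres of its face and, by antipodality, not touching the other new sphere. This has $12+3+3=18$ contacts and degree sequence $(5,5,5,5,5,5,3,3)$, so $C(8)\ge 18$.

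For the upper bound $C(8)\le 18$ the plan is to argue by contradiction, using the already-proved value $C(7)=15$ through vertex deletion. Suppose $\mathcal P=\bigcup_{i=1}^{8}(x_i+\mathbb{S}^{2})$ is a packing with $|E(\mathcal P)|=k\ge 19$. Deleting any one sphere $x_i$ leaves a packing of seven spheres with $k-\deg x_i$ contacts, whence $k-\deg x_i\le 15$, i.e. $\deg x_i\ge k-15$ for every $i$. Summing over $i$ and applying the Handshaking Lemma gives $2k=\sum_{i}\deg x_i\ge 8(k-15)$, so $k\le 20$. It therefore remains only to rule out $k=20$ and $k=19$.

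If $k=20$ then every $\deg x_i\ge 5$ while $\sum_i\deg x_i=40$, forcing $\deg x_i=5$ for all $i$: the contact graph is $5$-regular on eight vertices, and deleting any sphere yields a $C(7)$-extremal seven-sphere packing. If $k=19$ then every $\deg x_i\ge 4$ and $\sum_i\deg x_i=38$; since $\mathcal P$ contains at most one sphere of degree $7$ (two such spheres would touch each other and force the remaining six onto the radius-$\sqrt{3}$ intersection circle of two radius-$2$ spheres whose centres are distance $2$ apart, where at most five points can lie pairwise at distance $\ge 2$), the sorted degree sequence must be one of
$(7,6,5,4,4,4,4,4)$, $(7,5,5,5,4,4,4,4)$, $(6,6,6,4,4,4,4,4)$, $(6,6,5,5,4,4,4,4)$, $(6,5,5,5,5,4,4,4)$, or $(5,5,5,5,5,5,4,4)$. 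Each of these I would exclude by combining three ingredients: suitable analogues, in the eight-sphere setting, of Lemmas~1--6 above; a new lemma to the effect that the seven neighbours of a degree-$7$ sphere lie on a common sphere with pairwise angular separations $\ge 60^\circ$ and therefore span an internal contact graph with too few edges to make the total reach $19$; and, for the degree-heavy sequences, deleting a sphere of degree $4$ to land again on a $C(7)$-extremal seven-sphere packing and verifying directly that no such configuration admits the required extension.

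The hard part will be the case $k=20$ and the degree-heavy case $(5,5,5,5,5,5,4,4)$: both reduce to understanding the $C(7)$-extremal packings well enough to show that none extends to the required eight-sphere configuration, and this needs a genuine structural input — ideally a classification up to congruence of the seven-sphere packings attaining $15$ contacts — that goes beyond the combinatorics of degree sequences. The remaining ingredient of comparable difficulty is the spherical-code estimate bounding the number of mutual contacts among the neighbours of a degree-$7$ sphere; once these two geometric facts are in hand, the rest is the same kind of degree-sequence book-keeping carried out above for $n=7$.
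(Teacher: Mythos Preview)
Your route differs from the paper's in a useful way. The paper never invokes $C(7)=15$: it assumes $C(8)\ge 19$, uses the Handshaking Lemma to get $\sum_i\deg x_i\ge 38$, and then runs a raw case split on $\max_i\deg x_i\in\{5,6,7\}$, listing every compatible degree sequence (including many with degree-$3$ vertices) and asserting each to be ``a contradiction'' without further written argument. Your vertex-deletion bound $\deg x_i\ge k-15$ both caps $k$ at $20$ and forces minimum degree $\ge 4$ when $k=19$, shrinking the case analysis to six sequences plus the $5$-regular $k=20$ case; your argument that at most one sphere can have degree $7$ is correct and is an ingredient the paper never isolates. Your lower-bound construction is also fine and more explicit than anything in the paper.

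Both arguments, however, bottom out at the same unfinished place: the actual geometric exclusion of the surviving degree sequences. You are candid that the $5$-regular case and $(5,5,5,5,5,5,4,4)$ require structural information about the $C(7)$-extremal seven-sphere packings, and you do not supply it; the paper, for its longer list, writes only ``which is a contradiction'' with no analogues of Lemmas~1--6 stated for $n=8$. So what you have is a cleaner reduction attached to the same missing hard step. Your proposed tools (spherical-code bounds around a degree-$7$ vertex, deleting a degree-$4$ sphere to land on a $C(7)$-extremal configuration) are reasonable but dispose only of the easiest cases; the sequences with no vertex of degree $7$ are where the genuine geometric work lies, and neither you nor the paper carries it out here.
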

\begin{proof}
Assume to that contrary that $C(8) \geq 19$. Then there exists a sphere packing
$$\mathcal{P} = \bigcup_{i=1}^{8} (x_{i} + \mathbb{S}^2) \hookrightarrow \mathbb{R}^3$$
with $V(\mathcal{P}) = \{x_{1},x_{2},x_{3},x_{4},x_{5},x_{6},x_{7},x_{8}\}$ and $|E(\mathcal{P})| \geq 19$. By the
Handshaking Lemma,
$$\frac{1}{2}\sum_{i=1}^{8} \deg x_{i} \geq 19, \text{ and hence, } \sum_{i=1}^{8} \deg x_{i} \geq 38.$$
Now,
$$5 \leq \max_{1 \leq i \leq 8} \deg x_{i} \leq 7.$$
\begin{enumerate}[ncases]
\item $\displaystyle \max_{1 \leq i \leq 8} \deg x_{i} = 7$.
\begin{subproof}
 Then $\displaystyle \sum_{i=1}^{7} \deg x_{i} \geq 31$. Furthermore, without loss of generality,
 \begin{enumerate}
 
 \item $\displaystyle \max_{1 \leq i \leq 7} \deg x_{i} = 7$.
 \begin{subproof}
 Then $\displaystyle \sum_{i=1}^{6} \deg x_{i} \geq 24$. Furthermore, without loss of generality,
 \begin{enumerate}
 \item $\displaystyle \max_{1 \leq i \leq 6} \deg x_{i} = 7$.
 \begin{subproof}
 Then $\displaystyle \sum_{i=1}^{5} \deg x_{i} \geq 17$. Hence, the degree sequence is either \\
 $(7,7,7,5,3,3,3,3)$ or $(7,7,7,4,4,3,3,3)$, which is a contradiction.
 \end{subproof}
 \item $\displaystyle \max_{1 \leq i \leq 6} \deg x_{i} = 6$.
 \begin{subproof}
 Then $\displaystyle \sum_{i=1}^{5} \deg x_{i} \geq 18$. Hence, the degree sequence is either \\
 $(7,7,6,6,3,3,3,3)$, $(7,7,6,5,4,3,3,3)$, or $(7,7,6,4,4,4,3,3)$, which is a contradiction.
 \end{subproof}
 
 \item $\displaystyle \max_{1 \leq i \leq 6} \deg x_{i} = 5$.
 \begin{subproof}
 Then $\displaystyle \sum_{i=1}^{5} \deg x_{i} \geq 19$. Hence, the degree sequence is either \\
 $(7,7,5,5,5,3,3,3)$, $(7,7,5,5,4,4,3,3)$, or $(7,7,5,4,4,4,4,3)$, which is a contradiction
 \end{subproof}
 \end{enumerate}
 \end{subproof}
 
 \item $\displaystyle \max_{1 \leq i \leq 7} \deg x_{i} = 6$.
 \begin{subproof}
 Then $\displaystyle \sum_{i=1}^{6} \deg x_{i} \geq 25$. Furthermore, without loss of generality,
 \begin{enumerate}
 \item $\displaystyle \max_{1 \leq i \leq 6} \deg x_{i} = 6$.
 \begin{subproof}
 Then $\displaystyle \sum_{i=1}^{5} \deg x_{i} \geq 19$. Hence, the degree sequence is either \\
 $(7,6,6,6,4,3,3,3)$, $(7,6,6,5,5,3,3,3)$, or $(7,6,6,4,4,4,4,3)$, which is a contradiction.
 \end{subproof}
 \item $\displaystyle \max_{1 \leq i \leq 6} \deg x_{i} = 5$.
 \begin{subproof}
 Then $\displaystyle \sum_{i=1}^{5} \deg x_{i} \geq 20$. Hence, the degree sequence is \\
 $(7,6,5,4,4,4,4,4)$, which is a contradiction.
 \end{subproof}
 \end{enumerate}
 \end{subproof}
 
 \item $\displaystyle \max_{1 \leq i \leq 7} \deg x_{i} = 5$.
 \begin{subproof}
 Then $\displaystyle \sum_{i=1}^{6} \deg x_{i} \geq 26$. Hence, the degree sequence is either \\
 $(7,5,5,5,5,5,3,3)$, $(7,5,5,5,5,4,4,3)$, or $(7,5,5,5,4,4,4,4)$, which is a contradiction.
 \end{subproof}
 \end{enumerate}
\end{subproof}

\item $\displaystyle \max_{1 \leq i \leq 8} \deg x_{i} = 6$.
\begin{subproof}
 Then $\displaystyle \sum_{i=1}^{7} \deg x_{i} \geq 32$. Furthermore, without loss of generality,
 \begin{enumerate}
 \item $\displaystyle \max_{1 \leq i \leq 7} \deg x_{i} = 6$.
 \begin{subproof}
 Then $\displaystyle \sum_{i=1}^{6} \deg x_{i} \geq 26$. Hence, the degree sequence is either \\
 $(6,6,6,6,5,3,3,3)$, $(6,6,6,6,4,4,3,3)$, $(6,6,6,5,5,4,3,3)$, $(6,6,6,5,4,4,4,3)$, \\
 $(6,6,6,4,4,4,4,4)$, or $(6,6,5,5,4,4,4,4)$, which is a contradiction.
 \end{subproof}
 \item $\displaystyle \max_{1 \leq i \leq 7} \deg x_{i} = 5$.
 \begin{subproof}
 Then $\displaystyle \sum_{i=1}^{6} \deg x_{i} \geq 27$. Hence, the degree sequence is either \\
 $(6,5,5,5,5,4,4,4)$ or $(6,5,5,5,5,5,4,3)$, which is a contradiction.
 \end{subproof}
 \end{enumerate}
\end{subproof}

\item $\displaystyle \max_{1 \leq i \leq 8} \deg x_{i} = 5$.
\begin{subproof}
 Then $\displaystyle \sum_{i=1}^{7} \deg x_{i} \geq 33$. Hence, the degree sequence is either $(5,5,5,5,5,5,5,3)$
 or $(5,5,5,5,5,5,4,4)$, which is a contradiction.
\end{subproof}

\end{enumerate}
\end{proof}

\end{document}